\newtheorem{definition}{Definition}[section]
\newtheorem{oss}[definition]{Remark}
\newtheorem{teo}{Theorem}[section]
\def \pmatrix{ \left( \begin{array} }
\def \endpmatrix{ \end{array} \right) }
\def\Cl{{\mathcal{C} \ell}_{0,n}}
\def\R{\mathbb{R}}
\def\C{\mathbb{C}}
\def\N{\mathbb{N}}
\def\A{\mathcal{A}}
\def\xv{\underline{x}}
\def\Pisa{\mathscr{P}^{(n)}_k}
\def\a{\alpha}
\def \cn{\tilde{c}}
\begin{document}

%\begin{frontmatter}

% \title{Matrix approach to hypercomplex Appell polynomials}
% \author[add1]{L. Aceto\corref{cor1}}
% \ead{lidia.aceto@unipi.it}
% \author[add2]{H. R. Malonek}
% \ead{hrmalon@ua.pt}
% \author[add2,add3]{G. Tomaz}
% \ead{gtomaz@ipg.pt}
% 
% \cortext[cor1]{Corresponding author}
% \address[add1]{Department of Mathematics,
% University of Pisa, 56127 Pisa, Italy}
% \address[add2]{CIDMA - Department of Mathematics,
% University of Aveiro, 3810-193 Aveiro, Portugal}
% \address[add3]{UDI/IPG - Research Unit for Inland Development,
% 6300-559 Guarda, Portugal} 

\date{\today}
\title{Matrix approach to hypercomplex Appell polynomials\thanks{{
This work was supported by Portuguese funds through the CIDMA-Center for 
Research and Development in Mathematics and Applications, and the Portuguese 
Foundation for Science and Technology (``FCT-Funda\c c\~ao para a Ci\^encia e
Tecnologia"), within project PEst-OE/MAT/UI4106/2014.}}}

\author[a]{Lidia Aceto}   
\affil[a]{Department of Mathematics, University of Pisa, Italy}

\author[b]{Helmuth R. Malonek}
\affil[b]{Department of Mathematics, University of Aveiro, Portugal}

\author[c]{Gra\c{c}a Tomaz}
\affil[c]{Department of Mathematics, Polytechnic Institute of Guarda, Portugal}
 
\maketitle

% \address{Dedicated to Professor Francesco A. Costabile 
% on his $70$th birthday}

\begin{abstract}
Recently the authors presented a matrix representation approach to real Appell
polynomials essentially determined by a nilpotent matrix with natural number 
entries. It allows to consider a set of real Appell polynomials
as solution of a suitable first order initial value problem. The paper aims to 
confirm that the unifying character of this approach can also be applied to 
the construction of homogeneous Appell polynomials that are solutions of a 
generalized Cauchy-Riemann system in Euclidean spaces of arbitrary dimension. 
The result contributes to the development of techniques for polynomial 
approximation and interpolation in non-commutative Hypercomplex Function
Theories with Clifford algebras.  
\end{abstract}

\noindent {\bf  Keyword:}
hypercomplex differentiability, Appell polynomials, creation matrix, Pascal
matrix
\smallskip

\noindent  {\bf MSC}:  65F60, 30G35, 11B83.

%\end{frontmatter}

 %\linenumbers

\section{Introduction}
In \cite{ace15} the authors presented a matrix representation
approach to all types of Appell polynomial sequences $\left\{p_k
(x)\right\}_{k\geq0}$ of one real variable which relies
essentially on the matrix $H$ defined by
\begin{equation} \label{H}
(H)_{ij}=\left\{
    \begin{array}{ll}
      i, & \quad  i=j+1 \\
      0, & \quad  \hbox{otherwise,}  \qquad i,j=0,1,\ldots,m.
    \end{array}
  \right.
\end{equation}
In fact,  considering that a sequence of real polynomials  of degree $k$ 
$\{p_k(x)\}_{k\geq 0}$ is called \textit{Appell polynomial sequence} if
\begin{equation}   \label{Apdiff1}
\displaystyle{\frac{d}{dx}}p_k(x) = k \, p_{k-1}(x), \quad 
k=1,2,\dots,
\end{equation}
cf.  \cite{app80}, by introducing the vector
${\bf p}(x)=[p_0(x)\;\;p_1(x)\;\cdots\;p_m(x)]^T,$
from relation (\ref{Apdiff1}) we obtain the  first order differential equation
\begin{eqnarray} \label{ode}
\frac{d}{dx}{\bf p}(x) = H \, {\bf p}(x),
\end{eqnarray}
whose general solution is
\begin{eqnarray}   \label{genPasc}
{\bf p}(x) = e^{Hx} \, {\bf p}(0).
\end{eqnarray}
It is evident that the role of $H$ is twofold: it acts both as 
\textit{derivation matrix} and  also as some type of \textit{creation matrix}; 
in fact, the different kinds of Appell polynomials are uniquely determined 
from (\ref{genPasc})  by choosing the entries of the initial value vector 
${\bf p}(0).$ In some sense, this matrix reveals the arithmetical
tape which glues the different types of Appell polynomials together. 

The prototype of an Appell sequence are the monomials $p_k(x)=x^k.$ The 
Bernoulli 
polynomials as the most prominent representatives of Appell polynomials are in 
Numerical Analysis almost so important as orthogonal polynomials 
are \cite{cos99,cos06,milo13}. But the list of Appell polynomials includes, 
among others, also the classical polynomials named after Frobenius-Euler, 
Hermite, Laguerre, and Chebyshev.

Although an old subject, during the last two decades the
interest in Appell polynomials and their applications has
significantly increased. 
As very few examples, recent applications of Appell polynomials exist
in fields like probability theory and statistics, cf.
\cite{ans09, sal11}, linear elasticity \cite{bock} or approximation of 
3D-mappings in \cite{mafa10a}. Results in the
framework of noncommutative Clifford algebras and related to the
Gel'fand-Tsetlin branching approach gave evidence to Appell
polynomial sequences with shift, cf. \cite{pe13}, as
sequences of orthogonal polynomials in several variables, cf.
\cite{brasls10, lav12}. Operational approaches based on Appell
polynomials for generalizing Jacobi, Laguerre, Gould-Hopper,
and Chebyshev polynomials are used in the recent papers
\cite{cael14,cafama11a, cama11}. Employing methods of
representation theory, they are also tools for applications in
quantum physics \cite{wein95}.

Some authors were concerned with 
finding new characterizations of Appell polynomials themselves through new
approaches. We mention, for instance,
the approach developed in \cite{yan09}, which makes
use of the generalized Pascal functional matrices and
the characterization proposed in  \cite{cos10}
which is based on a determinantal definition.
As previously quoted, the authors introduced in \cite{ace15} a matrix 
approach in the real case which has already been applied in the context of 
``image synthesis'' \cite{rom14}, and in connection with ``evolution equations'' 
 for the construction of a new algorithm of deflation \cite{yam15}.

In addition,  Appell polynomial sequences were subject to innumerable 
generalizations, mostly depending from the type of applications where they could 
be advantageously used, including multivariate
commutative or noncommutative settings.

This paper intends to confirm the advantageously use of the
matrix approach to real Appell polynomial sequences developed
in \cite{ace15} for the case of monogenic polynomials in
arbitrary dimensions and in a noncommutative hypercomplex
setting, i.e. for Clifford algebra valued polynomials in the
kernel of a generalized Cauchy-Riemann operator, cf.
\cite{desosu92}. In \cite{guema99} it has been shown that
monogenic functions are hypercomplex differentiable. Therefore
the classical definition in (\ref{Apdiff1}) by the derivative
property can be analogously used for defining monogenic Appell
sequences with respect to the hypercomplex derivative, cf.
\cite{bogue10, fama06, fama07, mafa07}.

The paper is organized as follows. Basic concepts and notations used in Hypercomplex 
Function Theory are given in Section 2.  In Section 3  the 
extension to the hypercomplex case of the unifying matrix approach developed in 
\cite{ace15} is introduced. Since the
hypercomplex generalized Cauchy-Riemann operator is a linear
combination of two different types of first order differential
operators (a scalar one and a vector operator in the sense of
the underlying Clifford algebra) also two different nilpotent
matrices $H$ and $\tilde{H}$ are involved. Their relationship is essential
for guaranteeing the desired Appell property. The relation between
$H$ and $\tilde{H}$ reveals in a new and condensed matrix form
the transition from the real to the hypercomplex case.
Applying the results of Section 3 and the concept of a transfer
matrix, in Section 4 some of the main hypercomplex monogenic
counterparts of real Appell sequences are listed.

\section{Basic concepts and notation}

For an independent reading, in this section we repeat briefly some notions and 
results useful in the sequel,  mainly following 
\cite{cafama12,desosu92, GHSp08, ma04}. 
\begin{definition}
Let $\{e_1,e_2,\dots,e_n\}$ be an orthonormal basis of the
Euclidean vector space $\R^{n}$ provided with a non-commutative
product according to the  multiplication rules
$e_i e_j +e_j e_i =-2\delta_{ij},\,i,j=1,2,\dots,n,$
where $\delta_{ij}$ is the Kronecker symbol. The associative
$2^n-$dimensional Clifford algebra $\Cl$ over $\R$ is the set
of numbers $\a \in \Cl$ of the form
$\sum_{A}\a_{A} e_{A},$
 with the basis $\{e_A: A\subseteq\; \{ 1,\dots,n\}\}$
 formed by $e_A=e_{h_1}e_{h_2}\dots e_{h_r}$, $1\leq h_1< \dots < h_r\leq n,  e_{\emptyset}=e_0=1\;$ and where
the components $\a_A$ are real numbers.
 The conjugate of $\a$ is defined
 by $\bar{\a}=\sum_A \a_A\bar{e}_A,$ with 
$\bar{e}_A=\bar{e}_{h_r}\bar{e}_{h_{r-1}}\dots \bar{e}_{h_1};\; 
\bar{e}_k=-e_k, 
k=1,\ldots, n, \bar{e}_0=e_0=1.$
\end{definition}

In general, the vector space $\R^{n+1}$ is embedded
in $\Cl$ by identifying the element $(x_0,x_1,\dots,x_n)\in
\R^{n+1}$ with an element of the real vector space $\A_n:=\hbox{span}_{\R}\{1,e_1,\ldots,e_n\}\subset \Cl.$
For our purpose we only consider such elements $x \in
\A_n$  of the form $x=x_0+\sum_{k=1}^ne_kx_k=x_0+\xv$, called
\emph{paravectors } (naturally, $x_0$ and $\xv$ are called the
\emph{scalar part} and the \emph{vector part} of $x$,
respectively). Similarly to the complex case the conjugate
$\bar{x}$ and the norm $|x|$ of $x$ are given by
$\bar{x}=x_0-\xv$ and
$|x|=(x\bar{x})^{1/2}=(\bar{x}x)^{1/2}=(x_0^2+x_1^2+\dots+x_n^2)^{1/2},$
respectively.

In Hypercomplex Function Theory $\, \Cl-$valued functions are studied. They are 
functions $f: \Omega \subseteq \R^{n+1}\cong \A_n \rightarrow \Cl$ defined in an 
open subset by $f(z)=\sum_A 
f_A(z)e_A,$ where $f_A(z)$ are real valued functions. We will focus on a special 
class of these functions analogous to complex holomorphic functions and 
connected with them via the following concept. 
\begin{definition}
A function $f$ is called left (right) monogenic in  $\Omega$ if
it is a solution of the differential equation
$\overline{\partial}f=0$ ($f\overline{\partial}=0$) where
$$\overline{\partial}:=\frac{1}{2}(\partial_0+\partial_{\xv}), \;\;\hbox{with}\;\;
\partial_0:=\frac{\partial}{\partial {x_0}},\;\;\hbox{and}\;\; \partial_{\xv}:=\sum_{k=1}^ne_k\frac{\partial}{\partial x_k},$$
generalizes the Cauchy-Riemann operator ($n=1, e_1\equiv i, z=x+iy$)
\[\frac{\partial}{\partial {\bar{z}}}=\frac{1}{2}\left(\frac{\partial}{\partial 
{x}}+i \frac{\partial}{\partial {y}}\right).\]
The operator $\partial:=\frac{1}{2}(\partial_0-\partial_{\xv})$ is called the 
conjugate generalized Cauchy-Riemann operator or the hypercomplex differential 
operator.
\end{definition}
Hereafter we only deal with left monogenic functions and we shall refer to them 
simply as monogenic functions (right monogenic functions are treated 
analogously). Notice that in the case of a paravector-valued function $f$ of the 
variable $x \in \A_n,$ the hypercomplex partial differential equation 
$\overline{\partial}f=0$  is, except for the 
complex case ($n=1$), equivalent to an over-determined Cauchy-Riemann system of
$\frac{n(n+1)+2}{2}$ first order differential equations for the $(n+1)$ 
component functions of $f.$

We remark that hypercomplex differentiability as generalization of complex 
differentiability has to be understood in the following way: a 
function $f$ defined in an open domain $\Omega \subseteq \R^{n+1}$ is 
hypercomplex differentiable supposed there exists in each point of $\Omega$ a 
uniquely defined areolar derivative $f'.$ Then $f$ is real differentiable and 
$f':=\partial f$. On the other hand,  $f$ is hypercomplex 
differentiable in $\Omega$ if and only if $f$ is monogenic, cf. 
\cite{guema99}. Consequently, if a hypercomplex function is monogenic then the 
existence of the hypercomplex derivative is guaranteed and can be obtained as 
result of the application of the conjugate generalized Cauchy-Riemann operator.

\section{Sequences of monogenic hypercomplex Appell polynomials}

Considering that for each $x \in \A_n$, 
\[\overline{\partial}x^n=\frac{1}{2}(1-n), \qquad n \in \N,\]
only in the complex case ($n=1$) the function $x^n$ belongs to the set of 
monogenic functions. Obviously, this fact causes
problems for the consideration of monogenic polynomials in the
ordinary way and, moreover, for the whole understanding of a
suitable analog to power series in monogenic function theory,
cf.  \cite{ma90b}. Indeed, in some sense the problem of
embedding integer powers of $x$ in a theory of monogenic
functions was in the 90-ties of the last century for more than
a decade a driving force for modifying or extending the class
of monogenic functions. The result was, for example, the
consideration of a \textit{Modified Clifford Analysis} (H.
Leutwiler et al.) or the introduction of \textit{holomorphic
Cliffordian functions} (G. Laville et al.). The third way to overcome the
problem `inside' of the class of ordinary monogenic
functions, namely by generalizing Appell's concept of
power-like polynomials will be explained now (for further details, 
see \cite{cafama12}).

Motivated by relation  (\ref{Apdiff1}) and the previously mentioned concept of 
hypercomplex derivative, Appell sequences of homogeneous 
monogenic
polynomials in the framework of Clifford Analysis have been
introduced in the following way, \cite{fama07, mafa07}:

\begin{definition}\label{generalizedAS}
A sequence of multivariate  homogeneous polynomials $\{\phi_k(x)\}_{k\geq 0}$ of 
degree $k$ in the variable $x \in \A_n$  is called a
\it{generalized Appell sequence} with respect to  the
hypercomplex differential operator  $\partial$  if the following conditions 
are satisfied:
\begin{description}
\item[(i)] $\overline{\partial}\phi_k(x)=0,$ i.e.,  $\phi_k(x)$ is monogenic 
for each $\, k \ge 0;$
\item[(ii)] $\partial \phi_k(x)=k\phi_{k-1}(x),\quad    k=1,2,\dots.$
\end{description}
\end{definition}

 \begin{oss}
 Hereafter, if $x=x_0+ \xv \in \A_n,$ we shall refer to $\phi_k(x)$ by using
equivalently the following notations: $\phi_k(x_0+\xv)$ or $\phi_k(x_0,
\xv).$
 \end{oss}

Before generalizing the matrix approach to hypercomplex Appell 
polynomials, it is important to observe that, setting $p_0(x) \equiv c_0 \neq 
0,$  from  (\ref{Apdiff1}) the following explicit representation  for the 
truncated sequence $\{p_k(x)\}_{k=0}^m$ of Appell polynomials in one real 
variable occurs:
\begin{eqnarray*}   
p_0(x) &=& c_0    \nonumber\\
p_1(x)&=& c_1+c_0 \, x   \nonumber \\
p_2(x)&=& c_2+2 \,c_1 \,x +c_0 \, x^2  \nonumber  \\
&\vdots& \nonumber\\
p_m(x)&=& c_m+ {m\choose 1}c_{m-1}\, x+{m\choose 2}c_{m-2}\, x^2+\cdots 
+{m\choose m}c_{0}\, x^m,
\end{eqnarray*}
or, equivalently, in a more  compact form 
\begin{equation}\label{compacAp}
p_k(x)= \sum_{j=0}^k \left(
                       \begin{array}{c}
                         k \\
                         j \\
                       \end{array}
                     \right) c_j \, x^{k-j},  \quad k=0,1,\dots,m, \qquad  c_0
\neq 0.
\end{equation}
In particular, in this notation  
\begin{equation} \label{p0}
 {\bf p}(0) = [c_0  \; c_1 \; \ldots\; c_m]^T.
\end{equation}
The formal replacement in  (\ref{compacAp}) and (\ref{p0}) of the real 
variable $x$ by $x_0,$ the scalar part of the paravector $x=x_0 + \xv \in \A_n,$ as well
as that of $c_j$ by  $\tilde{c}_j \,  \xv^j$ leads to  multivariate  homogeneous polynomials 
 \begin{equation}  \label{phipar}
 \phi_k(x) =  \sum_{j=0}^k \left(
                       \begin{array}{c}
                         k \\
                         j \\
                       \end{array}
                     \right) \tilde{c}_j \,  \xv^j \, x_0^{k-j},  \quad k=0,1,\dots,m
 \end{equation}
 and to the vector $ {\bm \phi} (0,\xv) = [\tilde{c}_0 \,  \xv^0  \;\; \tilde{c}_1 \,  \xv^1 \; \;\ldots\; \;\tilde{c}_m \,  \xv^m]^T.$
Consequently,  recalling that the entries of the lower triangular
 \textit{generalized Pascal matrix} $P(x_0) \equiv e^{H x_0}$ are given by 
  \begin{eqnarray}   \label{Px}
 (P(x_0))_{ij} = \left\{
     \begin{array}{cl}
     {i \choose j} \, x_0^{i-j}, & \quad  \textrm{$i\geq j $} \\
       0, & \quad  \hbox{otherwise,}  \qquad i,j=0,1,\ldots,m,
     \end{array}
   \right.
 \end{eqnarray}
and denoting by  $ {{\bm \phi}}(x) =  [\phi_0(x) \;\;\phi_1(x) \;\;\cdots\;\;\phi_m(x)]^T,$
the corresponding matrix form of  (\ref{phipar}) is  
\[
 {\bm \phi}(x) = e^{H x_0}   {\bm \phi} (0,\xv),
 \]
which represents the counterpart in the hypercomplex framework of (\ref{genPasc}). 
 Introducing the diagonal matrix  $D_{\cn}=\hbox{diag}[\cn_0\;\;\cn_1\;\cdots \;\cn_m],$ and the vector
 \begin{equation}  \label{xv}
{\bm \xi} (\xv)= [1\;\;\xv\;\;\xv^2\;\;\cdots\;\;\xv^{m}]^T
 \end{equation}
 the previous relation becomes
\begin{equation}  \label{vetfi}
 {\bm \phi}(x) = e^{H x_0}  D_{\cn}  \, \, {\bm \xi}(\xv).
 \end{equation}
Now, the entries of such vector are generalized Appell polynomials with respect to $\partial$ if  the two
properties in Definition~\ref{generalizedAS} 
are verified. It is evident that this could lead to impose some constraints on the diagonal coefficients of $D_{\cn}.$ In order to deduce them,
first of all we need to rewrite in matrix form the action of $\partial_{\xv}$ on the vector ${\bm \xi}(\xv).$ By virtue to the fact that  \cite[p. 219]{desosu92} 
\begin{equation*}
\partial_{\xv}(\xv^{k})=\left\{ \begin{array}{ll}
         - k \, \xv^{k-1}, & \mbox{$k$ even}\\
         - (n+k-1) \, \xv^{k-1},  \quad & \mbox{$k$ odd,}
        \end{array} \right.
\end{equation*}
introducing the matrix $\tilde H$ defined  by
\begin{equation} \label{Ht}
(\tilde{H})_{ij}=\left\{
    \begin{array}{ll}
      -(n+i-1), & \quad  i=j+1\wedge j\; \hbox{even} \\
      -i, & \quad  i=j+1\wedge j\; \hbox{odd} \\
      0, & \quad  \hbox{otherwise,} \qquad \qquad \quad i,j=0,1,\dots,m,
    \end{array}
  \right.
\end{equation}
we obtain 
 \begin{equation}\label{tildeH}
    \partial_{\xv}  \, {\bm \xi}({\xv}) = \tilde{H}{\bm \xi}(\xv),
 \end{equation}
which means that $ \tilde{H}$ is the derivation matrix playing the role of $\partial_{\xv}.$  
 
 \begin{oss} \label{complex}
In the complex case ($n=1$)  we obtain simply $\tilde{H}=-H.$ Denoting by 
$w=x_0 + \xv,$ with $\xv= ix_1,$ it is 
easily  checked that  for ${\bm \xi}({w})= [1\;\;w\;\;w^2\;\cdots\;w^{m}]^T$ 
one has  $\overline{\partial} {\bm \xi}({w})= \mathbf{0}^T,$ the null vector,  
and $\partial {\bm \xi}({w})= H  {\bm \xi}({w}),$ i.e., the entries of
${\bm \xi}({w})$ are hypercomplex Appell polynomials.
 \end{oss}

Therefore,  by applying  to both sides   in (\ref{vetfi})  the hypercomplex differential operator
 we obtain
 \begin{eqnarray} \label{parfi}
\bar{\partial}{\bm \phi}(x)&=&\frac{1}{2}
(\partial_0+\partial_{\xv})(e^{H x_{0}}D_{\cn} \ {\bm \xi}(\xv))   \nonumber  
\\
&=&\frac{1}{2}\left[(\partial_0 e^{H x_{0}})D_{\cn}\ {\bm \xi}(\xv)+
e^{H x_{0}}D_{\cn}(\partial_{\xv} {\bm \xi}(\xv))\right]  \nonumber  \\
&=&\frac{1}{2}\left[(H e^{H x_{0}})D_{\cn} \ {\bm \xi}(\xv)+e^{H x_{0}}D_{\cn}\ (\tilde{H}{\bm \xi}(\xv))\right]  \nonumber \\
& =&\frac{1}{2}e^{H x_{0}}\left[HD_{\cn}+D_{\cn}\ \tilde{H}\right] {\bm \xi}(\xv).
\end{eqnarray}
\begin{oss}
Notice that due to the uniqueness theorem for monogenic functions (cf. \cite{ma04} or \cite[p.180]{GHSp08}) $D_{\cn}$ must be \textit{non-singular} and we can suppose that all $\cn_k \neq 0,\; k=0,1,\dots.$ Otherwise the restriction of a component $\phi_j(x_0,\xv)$  of ${\bm \phi}(x)$ with $\cn_j=0$ to the hyperplane $x_0=0$ would have the value $\phi_j(0,\xv)=0$ and, consequently, be identically zero. But this contradicts the property of belonging to an Appell sequence.
\end{oss}
We can now prove the following result.
\begin{teo}\label{mainteo}
Suppose $\cn_0 \neq 0$ is a given real number. The polynomials in (\ref{vetfi}) 
are monogenic if their remaining coefficients satisfy the conditions
\begin{equation}\label{condic}
\cn_{2k}=\cn_{2k-1} = \frac{(2k-1)!!(n-2)!!}{(n+2k-2)!!} \, \cn_0, \qquad  k=1, 
2,\dots, \, n>1.
\end{equation}
\end{teo}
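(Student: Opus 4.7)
The plan is to start from the identity~(\ref{parfi}) already derived by the authors, which gives
\[
\bar{\partial}{\bm \phi}(x)=\tfrac{1}{2}\,e^{Hx_{0}}\bigl[H D_{\cn}+D_{\cn}\tilde{H}\bigr]{\bm \xi}(\xv).
\]
Since the generalized Pascal matrix $e^{Hx_{0}}$ is unit lower triangular and hence invertible for every $x_{0}\in\mathbb{R}$, monogenicity of the entire vector ${\bm \phi}(x)$ reduces to the purely algebraic condition
\[
\bigl[H D_{\cn}+D_{\cn}\tilde{H}\bigr]{\bm \xi}(\xv)={\bf 0},\qquad \xv\in\mathbb{R}^{n}.
\]
Thus the task is to translate this into constraints on the diagonal entries of $D_{\cn}$.

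Next I would exploit the fact that both $H$ and $\tilde{H}$ are bidiagonal with nonzero entries only on the first subdiagonal, so the combination $H D_{\cn}+D_{\cn}\tilde{H}$ has the same sparsity pattern. A direct entrywise computation (using $(H)_{j+1,j}=j+1$ together with the two cases of $(\tilde{H})_{j+1,j}$ in (\ref{Ht})) shows that the $(j+1)$-st component of the product is simply
\[
\bigl[(j+1)\cn_{j}+\cn_{j+1}(\tilde{H})_{j+1,j}\bigr]\xv^{\,j},\qquad j=0,1,\ldots,m-1.
\]
Because $\xv^{\,j}$ is a nonzero element of $\A_{n}$ for generic $\xv$, each scalar bracket must vanish, which yields a two-term recurrence splitting into parity cases: when $j=2\ell$ is even, $(\tilde{H})_{j+1,j}=-(n+2\ell)$ produces $\cn_{2\ell+1}=\tfrac{2\ell+1}{n+2\ell}\cn_{2\ell}$; when $j=2\ell-1$ is odd, $(\tilde{H})_{j+1,j}=-(2\ell)$ produces $\cn_{2\ell}=\cn_{2\ell-1}$.

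Finally, iterating these two relations downward from $\cn_{0}$ gives
\[
\cn_{2k}=\cn_{2k-1}=\frac{(2k-1)(2k-3)\cdots 1}{(n+2k-2)(n+2k-4)\cdots n}\,\cn_{0},
\]
and rewriting numerator and denominator via double factorials, using $(n+2k-2)(n+2k-4)\cdots n=(n+2k-2)!!/(n-2)!!$, reproduces exactly formula (\ref{condic}). I do not foresee a serious obstacle: the proof is essentially a bookkeeping argument, and the only subtle point is justifying that the vector equation forces each subdiagonal coefficient to vanish separately, which follows immediately from the lower-bidiagonal structure together with $\xv^{\,j}\not\equiv 0$.
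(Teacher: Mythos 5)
Your proposal is correct and follows essentially the same route as the paper: both reduce monogenicity via the identity (\ref{parfi}) to the vanishing of $HD_{\cn}+D_{\cn}\tilde{H}$ on the subdiagonal, and both extract the same parity-split recurrences $\cn_{2\ell}=\cn_{2\ell-1}$ and $\cn_{2\ell+1}=\tfrac{2\ell+1}{n+2\ell}\cn_{2\ell}$ before telescoping into the double-factorial formula. The only (harmless) difference is that you additionally justify passing from the vector equation $[HD_{\cn}+D_{\cn}\tilde{H}]{\bm \xi}(\xv)=\mathbf{0}$ to entrywise vanishing using $\xv^{j}\neq 0$, whereas the paper directly imposes the matrix identity $HD_{\cn}+D_{\cn}\tilde{H}=O$.
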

\begin{proof}
For ${\bm \phi}(x)$ being monogenic, from (\ref{parfi}) the diagonal matrix $D_{\cn}$  should be of such a form that 
\begin{equation}\label{equalprod}
HD_{\cn}+D_{\cn}\ \tilde{H} = O,
\end{equation} 
the null matrix, which means that $H$ and $-\tilde{H}$ become similar matrices. By using the definitions of $H$ and $D_{\cn},$  
the product $HD_{\cn}$ is given by
\begin{equation}\label{prod1}
(HD_{\cn})_{ij}=\left\{
    \begin{array}{ll}
      (j+1)\cn_j, & \quad  i=j+1 \\
      0, & \quad  \hbox{otherwise.}
    \end{array}
  \right.
\end{equation}
Similarly, from (\ref{Ht}) the entries of $D_{\cn}\ \tilde{H}$ are
\begin{equation}\label{prod2}
(D_{\cn}\tilde{H})_{ij}=\left\{
    \begin{array}{ll}
      (n+j)\ \cn_{j+1}, & \quad  i=j+1\wedge j\; \hbox{even} \\
      (j+1)\ \cn_{j+1}, & \quad  i=j+1\wedge j\; \hbox{odd} \\
      0, & \quad  \hbox{otherwise.}
    \end{array}
  \right.
\end{equation}
Thus, the identity (\ref{equalprod}) is evident for  $i\neq j+1$. For ($i=j+1 \wedge j\;  \hbox{odd}$) and
($i=j+1 \wedge j\; \hbox{even}$), one has
\[(j+1)\cn_{j+1}=(j+1)\cn_j, \qquad (n+j)\cn_{j+1}=(j+1)\cn_{j},\]
respectively, or, equivalently, 
\[
 \cn_{2k}=\cn_{2k-1}, \qquad \cn_{2k-1} = \frac{2k-1}{n+2k-2}  \cn_{2k-2}.
\]
From these relations the assertion follows.
\end{proof}

\begin{oss}
 From (\ref{condic}) it is evident that the coefficients $\cn_j$ actually 
depends on $n,$ i.e.,  $\cn_j \equiv \cn_j (n),$ for each $j \ge 1.$
\end{oss}

\begin{oss}\label{rem} 
The choice of the coefficients (\ref{condic}) with $\cn_0=1$ gives the 
generalized Appell sequence 
$\{\Pisa (x)\}_{k\geq 0}$ introduced in \cite{fama07} by a monogenic generating 
exponential function.
\end{oss}
 
 Under the hypothesis of Theorem~\ref{mainteo} the polynomials $\phi_k(x)$ are monogenic. It remains to check if 
property (ii) in  Definition~\ref{generalizedAS} is satisfied. Applying $(\ref{equalprod})$ we get
\begin{eqnarray*}
{\partial}{\bm \phi}(x)&=&\frac{1}{2}
\left[(\partial_0-\partial_{\xv})(e^{H x_{0}}D_{\cn}\ {\bm \xi}(\xv))\right] \\ 
&=& \frac{1}{2}\left[H e^{H x_{0}}D_{\cn}\ {\bm \xi}(\xv) - e^{H x_{0}}D_{\cn} 
\ \tilde{H} {\bm \xi}(\xv)\right]\\
&=&\frac{1}{2} e^{H x_{0}} \left[H D_{\cn} - D_{\cn} \ \tilde{H} \right] {\bm \xi}(\xv)
=He^{H x_{0}} D_{\cn}\ {\bm \xi}(\xv)= H{\bm \phi}(x),
\end{eqnarray*}
which, actually, is the corresponding matrix representation of the property (ii) in 
Definition~\ref{generalizedAS}.

\begin{oss}
Following the recent article \cite{cafama15} on special properties of hypercomplex Appell polynomials like, for instance, three-term recurrence relations, further extensions including also all orthogonal Appell polynomial sequences obtained by the Gel'fand-Tsetlin procedure, cf. \cite{brasls10, lav12}, can be constructed.

In order to get such extensions, the suitable formal replacement in  (\ref{compacAp}) and (\ref{p0}) should be $x$ by $x_0$ and $c_j$ by  $\tilde{c}_j \,  \xv^j\, Q_s(\xv),$  where $Q_s(\xv)$ is an arbitrary chosen monogenic polynomial of fixed degree $s=0,1,2,\ldots.$ 

One should  notice that $Q_s(\underline{x})$ as an arbitrary chosen polynomial monogenic with respect to the generalized Cauchy-Riemann operator is automatically a generalized constant since it does not depend from $ x_0$ and therefore belongs also to the kernel of the conjugated generalized Cauchy-Riemann operator. This fact implies that from the point of view of the hypercomplex derivative the polynomial $Q_s(\xv)$ behaves like an ordinary constant number, i.e., its hypercomplex derivative is constant zero. Moreover, one could come to the conclusion that the first polynomial of hypercomplex Appell sequences could be such an initial monogenic generalized constant $Q_s(\xv)$  and all polynomials of higher degree would have $Q_s(\xv)$ as common factor. This idea is realized in the paper \cite{pe13}. 

The matrix form of these multivariate homogeneous polynomials is
\[
 {\bm \phi}(x) = Q_s(\xv) \, \,e^{H x_0}  D_{\cn}  \, \, {\bm \xi}(\xv)
 \]
and the constraints to be imposed on the diagonal entries of $ D_{\cn}$ in order to get generalized Appell polynomials are
\[
\cn_{2k}=\cn_{2k-1} = \frac{(2k-1)!!(n+2s-2)!!}{(n+2k+2s-2)!!} \, \cn_0, \qquad  k=1, 
2,\dots, \, n>1.
\]

Such constraints are achieved like in Theorem~\ref{mainteo}, but taking into account that the action of $\partial_{\xv}$ on the vector ${\bm \xi}(\xv)$ is performed by the matrix $\tilde{H^s}\, Q_s(\xv)$, where
\begin{equation*} 
(\tilde{H^s})_{ij}=\left\{
    \begin{array}{ll}
      -(n+i+2s-1), & \quad  i=j+1\wedge j\; \hbox{even} \\
      -i, & \quad  i=j+1\wedge j\; \hbox{odd} \\
      0, & \quad  \hbox{otherwise,} \qquad \qquad \quad i,j=0,1,\dots,m.
    \end{array}
  \right.
\end{equation*}
This matrix results from the fact that
\begin{equation*}
\partial_{\xv}(\xv^{k}Q_s(\xv))=\left\{ \begin{array}{ll}
         - k \, \xv^{k-1}\, Q_s(\xv), & \mbox{$k$ even}\\
         - (n+k+2s-1) \, \xv^{k-1}\, Q_s(\xv),  \quad & \mbox{$k$ odd.}
        \end{array} \right.
\end{equation*}
\end{oss}

\section{Special families of monogenic Appell polynomials}
We recall that the real Appell polynomials, $p_k(x),$ may also 
be characterized  in terms of their generating exponential function of the 
form 
\[
f(t)e^{tx}=\sum_{n=0}^{+\infty} p_n(x)\frac{t^n}{n!},
\]
 where
\begin{eqnarray}\label{formserie}
f(t)=\sum_{s=0}^{+\infty} c_s\frac{t^s}{s!},\qquad c_0\neq 0.
\end{eqnarray}
The creation matrix $H$ and formula (\ref{formserie}) are the essential tools to get a  \textit{transfer matrix} 
 whose action is to change the vector of monomial powers ${\bm \xi}(x)$ into the
associated Appell vector ${\bf p}(x).$  Such matrix is $f(H)$ and it has been proved in  
\cite[Theorem~3.2]{ace15} that 
\[{\bf p}(x)=f(H)  {\bm \xi}(x).\]

The generalized Appell sequence  $\{\Pisa (x)\}_{k\geq 0} $ referred in the Remark~\ref{rem} allows to define  a generalized exponential function in the hypercomplex setting by
\begin{equation}\label{gexp}
\hbox{Exp}_n(x)\equiv e^{x_0}F(\xv)=\sum_{k=0}^{+\infty}\frac{\Pisa (x)}{k!}, \qquad x
\in \A_n,
\end{equation}
where \begin{eqnarray*}
F(\xv )=\sum_{s=0}^{+\infty} \cn_s \frac{\xv^s}{s!},
\end{eqnarray*}
with $\cn_0=1$ and, for each $s \ge 1,$  $\cn_s$ satisfying (\ref{condic}). 
 
Due to the homogeneity of $\Pisa (x),$ we have
\[\hbox{Exp}_n(tx)=\sum_{k=0}^{+\infty}\Pisa (x)\frac{t^k}{k!},\]
showing that the generating function of the Appell sequence $\{\Pisa 
(x)\}_{k\geq 0} $ is of the form $G(x,t)=\hbox{Exp}_n(tx),\;x\in \A_n, t\in \R$.

\begin{oss}
In the complex case, $G(x,t)=e^{tx},\; x\in \C, \;t\in \R$  is
the generating function of the basic Appell sequence
$\{x^k\}_{k\geq 0}$. Thus, $\Pisa (x)$ behave as monomial
functions in the sense of the complex powers
$w^k=(x_0+ix_1)^k,\;k \ge 0.$
\end{oss}

Using the hypercomplex exponential function (\ref{gexp}) we
define generalized Appell polynomials, in general
non-homogeneous, as follows:
\begin{definition}
The sequence $\{\phi_k(x)\}_{k\geq 0}$ whose generating
function is $G(x,t)=f(t)\hbox{Exp}_n(tx),\;x\in \A_n$ and
$f(t)$ a formal power series as in (\ref{formserie}) is called
generalized Appell sequence.
\end{definition}
Noting that the function $f(t)$ coincides with the one
appearing in the generating function of real Appell
polynomials, the transfer matrix for ${\bm \phi}(x), x\in \A_n,$ is also
$f(H)$. However, in this case, $f(H)$ transforms the vector
\[
{\bm \xi}(\mathscr{P}^{(n)}(x))=[\mathscr{P}_0^{(n)}(x)\;\;\mathscr{P}_1^{(n)}
(x)\cdots \mathscr{P}_m^{(n)}
(x)]^T 
\]
in the generalized Appell vector, that is
$${\bm \phi}(x)=f(H) \, {\bm \xi}(\mathscr{P}^{(n)}(x)).$$

Different kinds of generalized Appell polynomials can be
derived by appropriate choice of $f(t).$  For
instance, the corresponding transfer matrices for
generalized Bernoulli, Frobenius-Euler, and monic Hermite
polynomials are (see \cite{ace15})
\[ 
\left(\sum_{k=0}^m\frac{H^k}{(k+1)!}\right)^{-1}, \qquad 
(1-\lambda)(P-\lambda I)^{-1}, \qquad
e^{-H^2/4}=\sum_{k=0}^m\frac{(-H^2)^k}{2^{2k}k!},
\]
respectively. In particular, setting $\lambda=-1$ in the transfer 
matrix of generalized Frobenius-Euler polynomials, we get generalized Euler 
polynomials.

\begin{oss}
We notice that, when $\xv=0$, 
\[
{\bm \xi}(\mathscr{P}^{(n)}(x_0))\equiv {\bm \xi}(x_0) 
\]
while the components of ${\bm \phi} (x_0)$ are the classical real Appell 
polynomials entries of the vector ${\bf p} (x_0).$
\end{oss}

%\section*{Acknowledgements}


\begin{thebibliography}{99}
\bibitem{ace15}
L.  Aceto, H.  R.  Malonek, G. Tomaz, A unified matrix
approach to the representation of Appell polynomials, Integral Transforms and Special Functions
26  (2015) 426-441.  

\bibitem{ans09}
M. Anshelevich, Appell polynomials and their relatives III,
Conditionally free theory, Illinois J. Math.  53 (2009) 39-66.

\bibitem{app80}
P. Appell, Sur une classe de polynomes,  Ann. Sci. \`Ecole Norm. Sup.
9 (2) (1880) 119-144.

\bibitem{bock} S. Bock, On monogenic series expansions with applications to 
linear elasticity, Adv. Appl. Clifford Algebras 24 (4) (2014) 931-943.

\bibitem{bogue10}
S. Bock, K. G{\"u}rlebeck, On a generalized Appell system and
monogenic power series, Math. Methods Appl. Sci. 33 (4) (2010) 394-411.

\bibitem{brasls10}
F. Brackx, H. De Schepper, R. L{\'a}vi{\v{c}}ka,  V. Sou\v{c}ek,
Gel'fand-Tsetlin procedure for the construction of orthogonal bases in
Hermitean Clifford analysis. In: T.E. Simos et al.(Eds.):
Numerical Analysis and Applied Mathematics-ICNAAM 2010, AIP Conf.
Proc. 1281, 2010, pp. 1508-1511.


\bibitem{cael14}
I. Ca\c{c}\~{a}o, D. Eelbode, Jacobi polynomials and generalized Clifford 
algebra-valued Appell sequences, Math. Methods Appl. Sci.  37  (2014) 1527-1537.

\bibitem{cafama11a}
I. Ca\c{c}\~ao, M. I. Falc\~ao,  H. R. Malonek,  Laguerre
derivative and monogenic Laguerre polynomials: an operational approach, 
Math. Comput. Modelling 53 (2011) 1084-1094.

\bibitem{cafama12}
I. Ca\c{c}\~ao, M. I. Falc\~ao,  H. R. Malonek, Matrix representations 
of a basic polynomial sequence in arbitrary dimension,  Comput. Methods Funct. 
Theory 12 (2) (2012)  371-391.

\bibitem{cama11}
I. Ca\c{c}\~{a}o, H. R. Malonek, On an
hypercomplex generalization of Gould-Hopper and related Chebyshev polynomials.
In: B. Murgante et al. (Eds.): Computational Science and Its Applications-ICCSA 
2011 (LNCS 6784, Part III). Springer-Verlag, Berlin, 2011, pp. 316-326.

\bibitem{cafama15}
 I. Ca\c{c}\~{a}o, M. I. Falc\~{a}o and H. R. Malonek, Three-Term Recurrence Relations for Systems of Clifford Algebra-Valued Orthogonal Polynomials, 15p.,
doi:10.1007/s00006-015-0596-z


\bibitem{cos99}
F. Costabile,  Expansions of real functions in Bernoulli polynomials and 
applications,  Conf. Sem. Mat. Univ. Bari 273 (1999).

\bibitem{cos06}
F. Costabile, F. Dell'Accio, M.I. Gualtieri,  A new approach to Bernoulli 
polynomials,  Rendiconti di Matematica, Serie VII
26 (2006) 1-12.

\bibitem{cos10} F. Costabile, E. Longo, A determinantal approach to
Appell polynomials,  J. Comp. Appl. Math. 234 (2010) 1528-1542.

\bibitem{desosu92}
R. Delanghe, F. Sommen, V. Sou\v{c}ek, Clifford Algebra and Spinor-Valued Functions. A function theory for the Dirac operator,
Mathematics and its Applications (Dordrecht) 53  Kluwer Academic Publishers, 1992.

\bibitem{fama06}
M.  I.  Falc\~{a}o, J.  F.\, Cruz, H.  R.   Malonek, Remarks on the 
generation of monogenic  functions.  In: K.\,G\"{u}rlebeck, C.\, K\"onke (Eds.), 
$17^{th}$
Inter. Conf. on the Appl. of Computer Science and Mathematics in Architecture and Civil Engineering, Weimar, 2006, pp. 12-14.

\bibitem{fama07}
 M.  I.  Falc\~{a}o,  H.  R.   Malonek, Generalized exponentials through Appell 
sets in $\R^{n+1}$ and Bessel functions. In: T.\,E.\,Simos, G.\, Psihoyios, C.\, 
Tsitouras (Eds.), AIP Conference Proceedings 936, 2007, pp. 738-741.
 
\bibitem{GHSp08}
K. G{\"u}rlebeck, K. Habetha, W. Spr{\"o}{\ss}ig, Holomorphic Functions in the 
Plane and {$n$}-Dimensional Space, Translated from the 2006 German original. 
Birkh\"auser Verlag, Basel, 2008.

\bibitem{guema99}
K. G\"{u}rlebeck, H.  R.  Malonek, A hypercomplex derivative of monogenic 
functions in $\R^{m+1}$ and its applications, Complex Variables 39  (1999) 
199-228.

\bibitem{lav12}
R. L{\'a}vi{\v{c}}ka, Complete orthogonal Appell systems for spherical 
monogenics, Complex Anal. Oper. Theory, 6 (2012) 477-489.

\bibitem{ma90b}
H. R. Malonek, Power series representation for monogenic functions in 
${\R}^{n+1}$ based on a permutational product, Complex Variables, Theory Appl. 
15 (1990) 181-191.

\bibitem{ma04}
H.  R.  Malonek, in: S.-L.\, Eriksson (Eds.),  Selected topics in hypercomplex 
function theory. In: Clifford algebras and potential theory, University of 
Joensuu, Research Reports  7, 2004, pp. 111-150.

\bibitem{mafa10a}
H. R.  Malonek, M. I. Falc\~{a}o, 3D-mappings by means of monogenic 
functions and their approximation,  Math. Methods Appl. Sci.  33 (2010) 
423-430.

\bibitem{mafa07}
 H. R.  Malonek, M.  I.  Falc\~{a}o, Special monogenic polynomials-properties 
and applications. In:  T.\,E.\,Simos, G.\, Psihoyios, C.\, Tsitouras (Eds.), AIP 
Conference Proceedings 936, 2007, pp. 764-767.

\bibitem{milo13}
G. V.  Milovanovic, Families of Euler-MacLaurin formulae for composite 
Gauss-Legendre and Lobatto quadratures,
Bull., Cl. Sci. Math. Nat., Sci. Math. 145 (38) (2013) 63-81.

\bibitem{pe13}
D. Pe\~{n}a Pe\~{n}a, Shifted Appell Sequences in Clifford Analysis, Results. 
Math. 63 (2013) 1145-1157.

\bibitem{rom14} P. E. Roman, T. Asahi, S. Casassus, Hermite-Gaussian functions 
for image synthesis, In: Asia-Pacific Signal and Information Processing 
Association, 2014 Annual Summit and Conference (APSIPA), 2014, pp. 1-9.

\bibitem{sal11}
P. Salminen,  Optimal stopping, Appell polynomials, and
Wiener-Hopf factorization, An International Journal of Probability and
Stochastic Processes  83  (2011) 611-622.
 
\bibitem{yam15} R. M. Yamaleev, Pascal matrix representation of evolution of 
polynomials, Int. J. Appl. Comput. Math. 1 (4) (2015) 513-525.

\bibitem{yan09}  
Y. Yang, H. Youn,  Appell polynomials sequences: a linear algebra approach.  JP 
Journal of Algebra, Number Theory and Applications 13  (2009) 65-98.

\bibitem{wein95}
St. Weinberg,  The Quantum Theory of Fields, Cambridge University Press, Vol. 1, 
1995.
 \end{thebibliography}
\end{document}